\theoremstyle{plain}
\newtheorem{theorem}{Theorem}[section]
\newtheorem{lemma}[theorem]{Lemma}
\newtheorem{proposition}[theorem]{Proposition}
\newtheorem{corollary}[theorem]{Corollary}
\theoremstyle{definition}
\newtheorem{define}{Definition}[section]
\theoremstyle{remark}
\newtheorem{remark}[theorem]{Remark}%numbering within section
\def\dist{\text{dist}}
\newtheorem*{acknowledgement}{Acknowledgement}
\begin{document}

\date\today

\title[A note on exhaustion of hyperbolic complex manifolds]{A note on exhaustion of hyperbolic complex manifolds}
\author{Ninh Van Thu\textit{$^{1,2}$} and Trinh Huy Vu\textit{$^{1}$}}

\address{Ninh Van Thu}
\address{\textit{$^{1}$}~Department of Mathematics, Vietnam National University, Hanoi, 334 Nguyen Trai, Thanh Xuan, Hanoi, Vietnam}
\address{\textit{$^{2}$}~Thang Long Institute of Mathematics and Applied Sciences,
	Nghiem Xuan Yem, Hoang Mai, HaNoi, Vietnam}
\email{thunv@vnu.edu.vn}
\address{Trinh Huy Vu}
\address{\textit{$^{1}$}~Department of Mathematics, Vietnam
National University at Hanoi, 334 Nguyen Trai str., Hanoi, Vietnam}
\email{trinhhuyvu1508@gmail.com}

\subjclass[2010]{Primary 32H02; Secondary 32M05, 32F18.}
\keywords{Hyperbolic complex manifold, exhausting sequence, $h$-extendible domain}

\begin{abstract}
The purpose of this article is to investigate a hyperbolic complex manifold $M$ exhausted by a pseudoconvex domain $\Omega$ in $\mathbb C^n$ via an exhausting sequence $\{f_j\colon \Omega\to M\}$ such that $f_j^{-1}(a)$ converges to a boundary point $\xi_0 \in \partial \Omega$ for some point $a\in M$.
\end{abstract}

\maketitle

\section{introduction}

Let $M$ and $\Omega$ be two complex manifolds. One says that \emph{$\Omega$ can exhaust $M$} or \emph{$M$ can be exhausted by $\Omega$} if for any compact subset $K$ of $M$ there is a holomorphic embedding $f_K \colon \Omega \to M$ such that $f_K(\Omega)\supset K$. In particular, one says that \emph{$M$ is a monotone union of $\Omega$} via a sequence of  holomorphic embeddings $f_j\colon \Omega\to M$ if $f_j(\Omega)\subset f_{j+1}(\Omega)$ for all $j$ and $M=\bigcup_{j=1}^\infty f_j(\Omega)$ (see \cite{FS77, Fr83}). 

In \cite[Theorem $1$]{Fr86}, there exists a bounded domain $D$ in $\mathbb C^n$ such that $D$ can exhaust any domain in $\mathbb C^n$. In addition, the unit ball $\mathbb B^n$ in $\mathbb C^n$ can exhaust many complex manifods, which are not biholomorphically equivalent to each other (see \cite{For04, FS77}). However, if $M$ in addition is hyperbolic then $M$ must be biholomorphically equivalent to $\mathbb B^n$ (cf. \cite{FS77}). Furthermore, any $n$-dimensional hyperbolic complex manifold, exhausted by a homogeneous bounded domain $D$ in $\mathbb C^n$, is biholomorphically equivalent to $D$. As a consequence, although the polydisc $\mathbb U^n$ and the unit ball $\mathbb B^n$ are both homogeneous and there is a domain $U$ in $\mathbb B^n$ that contains almost all of $\mathbb B^n$, i.e., $\mathbb B^n\setminus U$ has measure zero (cf. \cite[Theorem $1$]{FS77}) and is biholomorphically equivalent to $\mathbb U^n$, but $\mathbb U^n$ cannot exhaust the unit ball $\mathbb B^n$ since it is well-known that $\mathbb U^n$ is not biholomorphically equivalent to $\mathbb B^n$. 

Let $M$ be a hyperbolic complex manifold exhausted by a bounded domain $\Omega\subset \mathbb C^n$ via an exhausting sequence $\{f_j\colon \Omega\to M\}$. Let us fix a point $a\in M$. Then, thanks to the boundedness of $\Omega$, without loss of generality we may assume that $f_j^{-1}(a)\to p\in \overline{\Omega}$ as $j\to \infty$.  If $p\in \Omega$, then one always has $M$ is biholomorphically equivalent to $\Omega$ (cf. Lemma \ref{orbitinside} in Section \ref{S2}). 

The purpose of this paper is to investigate such a complex manifold $M$ with $p\in \partial \Omega$. More precisely, our first main result is the following theorem.
\begin{theorem}\label{togetmodel} Let $M$ be an $(n+1)$-dimensional hyperbolic complex  manifold  and let $\Omega$ be a pseudoconvex domain in $\mathbb{C}^{n+1}$ with $C^\infty$-smooth boundary. Suppose that $M$ can be exhausted by $\Omega$ via an exhausting sequence $\{f_j: \Omega \to M\}$. If there exists a point $a \in M$ such that the sequence $f_j^{-1}(a)$ converges $\Lambda$-nontangentially to a $h$-extendible boundary point $\xi_0 \in \partial \Omega$ (see Definition \ref{def-order} in Section \ref{S2} for definitions of the $\Lambda$-nontangentially convergence and of the $h$-extendibility), then $M$ is biholomorphically equivalent to the associated model $M_P$ for $\Omega$ at $\xi_0$.
\end{theorem}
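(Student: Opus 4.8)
The plan is to run the scaling (dilation) method adapted to the $h$-extendible boundary point $\xi_0$, in the spirit of the Wong--Rosay theorem and its extensions to domains of finite type. The whole argument is local near $\xi_0$, so I would first use the $h$-extendibility to normalize coordinates: after an affine change, $\xi_0 = 0$, the outward normal points along $\Re z_{n+1}$, and near $0$ the domain $\Omega$ is defined by $\Re z_{n+1} + P(z') + (\text{h.o.t.}) < 0$, where $P$ is the weighted homogeneous polynomial attached to the Catlin multitype of $\xi_0$; the model $M_P = \{(z', z_{n+1}) \in \mathbb{C}^n \times \mathbb{C} : \Re z_{n+1} + P(z') < 0\}$ is then precisely the associated model for $\Omega$ at $\xi_0$. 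Writing $p_j = f_j^{-1}(a)$ and letting $\delta_j$ be the distance of $p_j$ to $\partial \Omega$, I would introduce the anisotropic dilations $T_j$ built from the multitype weights, normalized so that $T_j(p_j) = q_0$ for a fixed base point $q_0 \in M_P$ and every $j$. The scaling theory for $h$-extendible (semiregular) points then gives that $T_j(\Omega)$ converges to $M_P$ in the local Hausdorff sense; the $\Lambda$-nontangential convergence of $p_j \to \xi_0$ is exactly what keeps the base points $T_j(p_j)$ from drifting to the boundary or to infinity of $M_P$, so this convergence is non-degenerate.

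With the scaling in place, I would transport the exhausting maps. Define $F_j := f_j \circ T_j^{-1} : T_j(\Omega) \to M$, so that $F_j(q_0) = a$ for all $j$, and define the inverses $G_j := T_j \circ f_j^{-1} : f_j(\Omega) \to T_j(\Omega) \subset \mathbb{C}^{n+1}$, so that $G_j(a) = q_0$. Because $M$ is hyperbolic, the family $\{F_j\}$ is normal by a Montel-type theorem for maps into hyperbolic manifolds, and the normalization $F_j(q_0) = a$ rules out compact divergence; passing to a subsequence yields a holomorphic limit $F : M_P \to M$ with $F(q_0) = a$. For the inverse direction, since $f_j(\Omega) \uparrow M$, the maps $G_j$ are eventually defined on every compact subset of $M$; using that $M_P$ is hyperbolic, together with localization and stability estimates for the Kobayashi metric near $\xi_0$, I would show that $\{G_j\}$ is normal with a holomorphic limit $G : M \to M_P$ satisfying $G(a) = q_0$.

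It then remains to verify that $F$ and $G$ are mutually inverse. On the relevant sets one has the exact identities $G_j \circ F_j = \mathrm{id}$ on $T_j(\Omega)$ and $F_j \circ G_j = \mathrm{id}$ on $f_j(\Omega)$; passing to the limit uniformly on compact subsets gives $G \circ F = \mathrm{id}_{M_P}$ and $F \circ G = \mathrm{id}_M$, so $F$ is a biholomorphism of $M_P$ onto $M$, which is the assertion.

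I expect the main obstacle to be controlling the inverse maps $G_j$: their targets $T_j(\Omega)$ vary with $j$ and the limit $M_P$ is unbounded, so normality and non-degeneracy of $G$ cannot be deduced from boundedness but must be extracted from uniform lower bounds for the Kobayashi (or Carath\'eodory) metric near the $h$-extendible point --- this is exactly where $h$-extendibility and the $\Lambda$-nontangential hypothesis do the essential work, preventing the rescaled inverse images from collapsing or escaping. A secondary technical point is checking that $F$ is non-degenerate, so that $G \circ F = \mathrm{id}$ forces both maps to be biholomorphic rather than merely holomorphic with $F$ dropping rank; this again follows from the metric estimates, which keep $dF_j$ uniformly bounded below at $q_0$.
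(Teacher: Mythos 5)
Your overall scaling framework matches the paper's: the same anisotropic dilations $T_j$ adapted to the multitype, the same normalization $T_j(f_j^{-1}(a))=(0',-1)$, and the same use of the $\Lambda$-nontangential convergence to ensure the scaled (local) domains converge to the model $M_P$. The gap is in your treatment of the maps $F_j=f_j\circ T_j^{-1}$ \emph{into} $M$: you claim their normality follows ``because $M$ is hyperbolic, by a Montel-type theorem for maps into hyperbolic manifolds.'' No such theorem holds for merely hyperbolic targets: the normal-family property for $\mathrm{Hol}(X,M)$ (every sequence has a subsequence that is either compactly divergent or locally uniformly convergent) is equivalent to \emph{tautness} of $M$ (Kiernan), and hyperbolicity is strictly weaker --- for instance, any bounded non-pseudoconvex domain is hyperbolic but not taut. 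The theorem only assumes $M$ hyperbolic, so this step has no justification. Note also that the natural fallback (Kobayashi-distance equicontinuity plus the normalization $F_j(q_0)=a$) only places the images $F_j(K)$ in $d^K_M$-bounded subsets of $M$, and in a hyperbolic manifold that is not complete hyperbolic such sets need not be relatively compact. You flag the ``main obstacle'' as the other direction, the maps $G_j=T_j\circ f_j^{-1}$ into the varying scaled domains; but that direction is exactly the one covered by known scaling machinery (Proposition \ref{pro-scaling}, quoted from \cite{NN19}), while the direction into $M$ is the one that genuinely requires new work.

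The paper's proof is architected precisely to get around this. It first takes the limit $F$ of the maps $T_j\circ f_j^{-1}$ (normality coming from the model side, not from $M$), then proves that this limit is a biholomorphism of $M$ \emph{onto its image} in $M_P$: nondegeneracy via the Cartan--Hurwitz--Sard Jacobian argument, and injectivity via hyperbolicity of $M$ combined with the stability of the Kobayashi metric under the convergence $\Omega_j\to M_P$ (Lemma \ref{conti-kob}). Only after $M$ has been realized as an open subset of the taut model $M_P$ does it deduce normality of the reverse family $f_j\circ T_j^{-1}$ (now viewed as maps into $M\cong F(M)\subset M_P$, using tautness of $M_P$), and then Proposition \ref{T:7} upgrades the limit to a biholomorphism onto all of $M_P$. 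Your final step --- passing the identities $G_j\circ F_j=\mathrm{id}$ and $F_j\circ G_j=\mathrm{id}$ to the limit --- is fine once both normal limits exist (it is essentially the content of Proposition \ref{T:7}), but without the paper's intermediate steps, or some substitute for them, the existence of a locally uniform limit of $f_j\circ T_j^{-1}$ on compact subsets of $M_P$ is unproven, and that is where the real difficulty of the theorem lies.
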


When $\xi_0$ is a strongly pseudoconvex boundary point, we do not need the condition that the sequence $f_j^{-1}(a)$ converges $\Lambda$-nontangentially to $\xi_0$ as $j\to \infty$. Moreover, in this circumstance, the model $M_P$ is in fact biholomorphically equivalent to $M_{|z|^2}$, which is biholomorphically equivalent to the unit ball $\mathbb B^n$. More precisely, our second main result is the following theorem.
\begin{theorem}\label{togetmodelstronglypsc} Let $M$ be an $ (n+1) $-dimensional hyperbolic complex manifold and let $\Omega$ be a pseudoconvex domain in $\mathbb{C}^{n+1}$. Suppose that $\partial\Omega$ is $\mathcal{C}^2$-smooth boundary near a strongly pseudoconvex boundary point $\xi_0 \in \partial \Omega$. Suppose also that $M$ can be exhausted by $\Omega$ via an exhausting sequence $\{f_j: \Omega \to M\}$. If there exists a point $a \in M$ such that the sequence $\eta_j := f_j^{-1}(a)$ converges to $\xi_0$, then $M$ is biholomorphically equivalent to the unit ball $\mathbb{B}^{n+1}$.
\end{theorem}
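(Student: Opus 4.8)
The plan is to run a Pinchuk-type scaling adapted to the exhaustion, exploiting the fact that near a strongly pseudoconvex boundary point the dilations converge to one and the same model $D_\infty := \{z\in\mathbb{C}^{n+1}:\Re z_{n+1}+|z'|^2<0\}\cong\mathbb{B}^{n+1}$ for \emph{every} sequence tending to $\xi_0$. This isotropy of the scaling in the tangential variables (the multitype being uniformly $(1,2,\dots,2)$) is exactly why the $\Lambda$-nontangential hypothesis of Theorem \ref{togetmodel} becomes unnecessary here, where for a general $h$-extendible point the weights vary with direction and one must control the approach.

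First I would normalize locally. After a translation, a unitary rotation, and a local biholomorphic change of coordinates near $\xi_0$, arrange $\xi_0=0$ with a $C^2$ defining function $\rho(z)=\Re z_{n+1}+|z'|^2+o(|z|^2)$ on $\Omega\cap U$. For each $j$ let $\xi_j\in\partial\Omega$ be the boundary projection of $\eta_j$ and $\epsilon_j=\dist(\eta_j,\partial\Omega)\to 0$, and let $\Lambda_j$ be the affine-unitary map carrying $\xi_j$ to $0$ and the inner normal to the $z_{n+1}$-axis, composed with the anisotropic dilation $z'\mapsto z'/\sqrt{\epsilon_j}$, $z_{n+1}\mapsto z_{n+1}/\epsilon_j$. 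Then $\Lambda_j(\Omega\cap U)$ converges to $D_\infty$ in the local Hausdorff sense and $\Lambda_j(\eta_j)\to q_0:=(0',-1)\in D_\infty$.

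The central objects are the biholomorphisms $E_j:=\Lambda_j\circ f_j^{-1}\colon f_j(\Omega)\to\Lambda_j(\Omega)$, defined on the exhausting family $f_j(\Omega)\nearrow M$ and satisfying $E_j(a)=\Lambda_j(\eta_j)\to q_0$. Since $f_j$ and $\Lambda_j$ are biholomorphic onto their images, $E_j$ is a Kobayashi isometry: $d_{\Lambda_j(\Omega)}(E_j(x),E_j(y))=d_{f_j(\Omega)}(x,y)$, and for $x,y$ in a fixed compact $K\subset M$ the right-hand side tends to the finite value $d_M(x,y)$ by hyperbolicity and the exhaustion. Combined with the localization of the Kobayashi metric at the strongly pseudoconvex point (so that the intrinsic metrics of $\Lambda_j(\Omega)$ converge to that of $D_\infty$), this confines $E_j(K)$ to a fixed compact subset of $D_\infty$; hence $\{E_j\}$ is locally uniformly bounded and, along a subsequence, converges to a holomorphic $E\colon M\to D_\infty$ with $E(a)=q_0$ and $d_{D_\infty}(E(x),E(y))=d_M(x,y)$. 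Running the same normal-family argument on the inverse maps $E_j^{-1}=f_j\circ\Lambda_j^{-1}\colon\Lambda_j(\Omega)\to f_j(\Omega)\subset M$, which are defined on domains increasing to $D_\infty$ and take values in the hyperbolic (hence taut) manifold $M$ with $E_j^{-1}(\Lambda_j(\eta_j))=a$, yields a holomorphic limit $G\colon D_\infty\to M$ with $G(q_0)=a$. Passing to the limit in $E_j\circ E_j^{-1}=\mathrm{id}$ and $E_j^{-1}\circ E_j=\mathrm{id}$ on compacta gives $E\circ G=\mathrm{id}_{D_\infty}$ and $G\circ E=\mathrm{id}_M$, so $E$ is a biholomorphism and $M\cong D_\infty\cong\mathbb{B}^{n+1}$.

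The hardest part is the non-degeneration of the two limits: that $E(M)\subset D_\infty$ with no escape to $\partial D_\infty$, and that $E,G$ are genuine inverse biholomorphisms rather than dimension-collapsing isometries. This rests on two uniform estimates that must be made precise, namely the localization of the Kobayashi metric near the strongly pseudoconvex $C^2$ point $\xi_0$ (of Graham/Forstneri\v{c}--Rosay type), which guarantees stability of $d_{\Lambda_j(\Omega)}$ under the convergence $\Lambda_j(\Omega)\to D_\infty$, and the convergence $d_{f_j(\Omega)}\to d_M$ supplied by the exhaustion together with the hyperbolicity (and tautness) of $M$. Once these are secured the scaling closes up exactly as in the Wong--Rosay theorem, of which the present statement is the exhaustion analogue.
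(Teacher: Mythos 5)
Your overall strategy is exactly the paper's: Pinchuk-type scaling at the strongly pseudoconvex point, normal families for the scaled maps $T_j\circ f_j^{-1}$ and their inverses, and a Cartan-type passage to the limit. However, two of your steps have genuine gaps. The first is the confinement step, which you run on the \emph{unlocalized} domains: you define $E_j:=\Lambda_j\circ f_j^{-1}$ on $f_j(\Omega)$ with values in $\Lambda_j(\Omega)$ and assert that the Kobayashi metrics of $\Lambda_j(\Omega)$ converge to that of $D_\infty$. That assertion is not available: the dilations blow up the part of $\Omega$ away from $\xi_0$, so $\Lambda_j(\Omega)$ does not converge to $D_\infty$ (only $\Lambda_j(\Omega\cap U)$ does), and $d_{\Lambda_j(\Omega)}$ may be much smaller than $d_{\Lambda_j(\Omega\cap U)}$. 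Moreover, even for the localized domains, locally uniform convergence of the Kobayashi metric (the paper's Lemma \ref{conti-kob}) controls distances only between points already known to lie in a fixed compact subset of $D_\infty$; a bound $d_{\Lambda_j(\Omega)}\bigl(E_j(x),E_j(a)\bigr)\le C$ does not by itself prevent $E_j(x)$ from drifting to $\partial D_\infty$ or to infinity. The missing ingredient is the localization of the exhaustion itself: every compact $K\Subset M$ satisfies $K\subset f_j(\Omega\cap U)$ for all large $j$ (the paper's Remark \ref{r1}, via Berteloot's attraction property), equivalently $f_j^{-1}(K)$ accumulates only at $\xi_0$. Your Graham/Forstneri\v{c}--Rosay metric localization can only be applied after one knows the points $f_j^{-1}(x)$ are near $\xi_0$, which is precisely what must be proved; once Remark \ref{r1} is in hand, normality of $F_j=T_j\circ f_j^{-1}$ on $f_j(\Omega\cap U)$ is supplied by a scaling normality theorem (Proposition \ref{pro-scaling}, or \cite[Theorem 3.11]{DN09}), where the uniform strongly pseudoconvex lower bounds on the metric actually enter.

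The second gap is the parenthetical ``hyperbolic (hence taut)'': this implication is false (tautness implies hyperbolicity, not conversely), and it is the only justification you give for extracting a limit of the inverse maps $E_j^{-1}$, which take values in $M$. The paper avoids needing tautness of $M$: it first proves that the limit $F$ of $F_j$ is nondegenerate (Hurwitz plus Sard) and injective (a Kobayashi-distance estimate based on Lemma \ref{conti-kob}), so that $M\cong F(M)\subset\mathcal{E}$; only then does it invoke tautness of the \emph{model} $\mathcal{E}$ to obtain normality of $F_j^{-1}$, with non-compact divergence ruled out by $F_j^{-1}(0',-1)=a$, and concludes via Proposition \ref{T:7}. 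Your isometry identity $d_{D_\infty}(E(x),E(y))=d_M(x,y)$ would indeed yield injectivity of $E$ and could replace the Hurwitz/Sard step, but it rests on the same confinement estimates that are missing above, so it cannot independently repair the tautness claim.
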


Notice that Theorem \ref{togetmodelstronglypsc} is a local version of \cite[Theorem $1.1$]{DZ19} and \cite[Theorem I]{Fr83} (see Corollary \ref{str-psc-ex} in Section \ref{S3}). We note that their proofs are based on the boundary estimate of the Fridman invariant and of the squeezing function for strongly pseudoconvex domains. However, in order to prove Theorem \ref{togetmodel} and Theorem \ref{togetmodelstronglypsc}, we shall use the  scaling technique, achieved recently in \cite{Ber06, DN09, NN19}.

By applying Theorem \ref{togetmodelstronglypsc} and  Lemma \ref{orbitinside}, we also prove that if a hyperbolic complex manifold $M$ exhausted by a general ellipsoid $D_P$ (see Section \ref{S4} for the definition of $D_P$), then $M$ is either biholomorphically equivalent to $D_P$ or  the unit ball $\mathbb B^n$ (cf. Proposition \ref{generalellipsoid} in Section \ref{S4}).  In particular, when $D_P$ is an ellipsoid $E_m\; (m\in \mathbb Z_{\geq 1})$, given by
$$
E_m=\left\{(z,w)\in \mathbb C^2 \colon |w|^2+|z|^{2m}<1\right\}, 
$$
 in fact Proposition \ref{generalellipsoid} is a generalization of \cite[Theorem $1$]{Liu18}.  

The organization of this paper is as follows: In Section~\ref{S2} we provide some results concerning the normality of  a sequence of biholomorphisms and the $h$-extendibility. In Section \ref{S3}, we give our proofs of Theorem \ref{togetmodel} and Theorem \ref{togetmodelstronglypsc}. Finally, the proof of Proposition  \ref{generalellipsoid} will be introduced in Section \ref{S4}.

\section{The normality and the $h$-extendibility}\label{S2}

\subsection{The normality of a sequence of biholomorphisms}

First of all, we recall the following definition (see \cite{GK} or \cite{DN09}). 
\begin{define} Let $\{\Omega_i\}_{i=1}^\infty$ be a sequence of open sets in a complex manifold $M$ and $\Omega_0 $ be an open set of $M$. The sequence $\{\Omega_i\}_{i=1}^\infty$ is said to converge to $\Omega_0 $ (written $\lim\Omega_i=\Omega_0$) if and only if 
	\begin{enumerate}
		\item[(i)] For any compact set $K\subset \Omega_0,$ there is an $i_0=i_0(K)$ such that $i\geq i_0$ implies that $K\subset \Omega_i$; and 
		\item[(ii)] If $K$ is a compact set which is contained in $\Omega_i$ for all sufficiently large $i,$ then  $K\subset \Omega_0$.
	\end{enumerate}  
\end{define}

Next, we recall the following proposition, which is a generalization of the theorem of H. Cartan (see \cite{DN09, GK, TM}).
\begin{proposition} \label{T:7}  Let $\{A_i\}_{i=1}^\infty$ and $\{\Omega_i\}_{i=1}^\infty$ be sequences of domains in a complex manifold $M$ with $\lim A_i=A_0$ and $\lim \Omega_i=\Omega_0$ for some (uniquely determined) domains $A_0$, $\Omega_0$ in $M$. Suppose that $\{f_i: A_i \to \Omega_i\} $ is a sequence of biholomorphic maps. Suppose also that the sequence $\{f_i: A_i\to M \}$ converges uniformly on compact subsets of $ A_0$ to a holomorphic map $F:A_0\to M $ and the sequence $\{g_i:=f^{-1}_i: \Omega_i\to M \}$ converges uniformly on compact subsets of $\Omega_0$ to a holomorphic map $G:\Omega_0\to M $.  Then either of the following assertions holds. 
	\begin{enumerate}
		\item[(i)] The sequence $\{f_i\}$ is compactly divergent, i.e., for each compact set $K\subset A_0$ and each compact set $L\subset \Omega_0$, there exists an integer $i_0$ such that $f_i(K)\cap L=\emptyset$ for $i\geq i_0$; or
		\item[(ii)] There exists a subsequence $\{f_{i_j}\}\subset \{f_i\}$  such that the sequence $\{f_{i_j}\}$ converges uniformly on compact subsets of $A_0$ to a biholomorphic map $F: A_0 \to \Omega_0$.
	\end{enumerate}
\end{proposition}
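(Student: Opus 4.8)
The plan is to observe first that, since the \emph{entire} sequence $\{f_i\}$ is assumed to converge to $F$ (and $\{g_i\}$ to $G$), every subsequence already converges to $F$; hence alternative (ii) will follow as soon as I show that $F$ is a biholomorphism of $A_0$ onto $\Omega_0$. So I assume that (i) fails, i.e. $\{f_i\}$ is \emph{not} compactly divergent, and aim to prove $F$ is biholomorphic onto $\Omega_0$. Failure of compact divergence gives compact sets $K\subseteq A_0$, $L\subseteq\Omega_0$, a subsequence, and points $x_i\in K$ with $f_i(x_i)\in L$. Passing to a further subsequence, $x_i\to x_0\in K$ and $f_i(x_i)\to y_0\in L$, and uniform convergence on compacta yields $y_0=F(x_0)\in\Omega_0$. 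This is the trigger: it places a value of $F$ in the interior of $\Omega_0$.

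Next I would extract the local inverse relations. Choose a small neighborhood $V\ni x_0$ with $\overline V\subseteq A_0$ so small that $F(\overline V)$ lies in a compact set $\overline{D}\subseteq\Omega_0$; for large $i$ one has $\overline V\subseteq A_i$, $\overline D\subseteq\Omega_i$ (kernel condition (i)), $f_i(\overline V)\subseteq\Omega_i$, and on $\overline D$ the inverses converge, $g_i\to G$ uniformly. Since $g_i\circ f_i=\mathrm{id}$ on $A_i$, letting $i\to\infty$ gives $G\circ F=\mathrm{id}$ on $V$, and the symmetric computation near $y_0$ (using $f_i\circ g_i=\mathrm{id}$ and $G(y_0)=x_0$) gives $F\circ G=\mathrm{id}$ near $y_0$. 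In particular $dF_{x_0}$ is invertible, so $\det dF\not\equiv 0$ on the connected manifold $A_0$, and likewise $\det dG\not\equiv 0$ on $\Omega_0$.

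Now comes the globalization, which I expect to be the main obstacle: one must rule out that the limit $F$ ``folds'' or carries interior points of $A_0$ onto $\partial\Omega_0$. I would handle this by combining two ingredients. First, each $f_i$ and $g_i$ is injective, so by the classical Hurwitz-type rigidity for locally uniform limits of injective holomorphic maps, each of $F$, $G$ is \emph{either} injective \emph{or} has identically vanishing Jacobian determinant; as $\det dF\not\equiv 0$ and $\det dG\not\equiv 0$, both $F$ and $G$ are injective, hence biholomorphisms onto their open images. Second, I claim $F(A_0)\subseteq\overline{\Omega_0}$: for $x\in A_0$ pick a small $U\ni x$; since $f_i\to F$ in $C^1$ on $\overline U$ (Cauchy estimates) and $dF_x$ is invertible, the images $f_i(U)\subseteq\Omega_i$ cover a \emph{fixed} ball about $F(x)$ for all large $i$, so by kernel condition (ii) that ball lies in $\Omega_0$, forcing $F(x)\in\overline{\Omega_0}$. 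Because $F(A_0)$ is open and contained in $\overline{\Omega_0}$, it lies in $\Omega_0$; thus $F^{-1}(\Omega_0)=A_0$ and the identity $G\circ F=\mathrm{id}$, valid near $x_0$, propagates to all of $A_0$ by analytic continuation.

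Finally I would run the symmetric argument for $G$ to obtain $G(\Omega_0)\subseteq\Omega_0$'s counterpart $A_0$ and $F\circ G=\mathrm{id}$ on all of $\Omega_0$. The two global identities $G\circ F=\mathrm{id}_{A_0}$ and $F\circ G=\mathrm{id}_{\Omega_0}$ show that $F\colon A_0\to\Omega_0$ is a bijective holomorphic map with holomorphic inverse $G$, i.e. a biholomorphism, which is exactly alternative (ii). The delicate point throughout is that neither injectivity of the limit nor the containment $F(A_0)\subseteq\overline{\Omega_0}$ can be obtained from $f_i\to F$ alone: the first needs the Hurwitz rigidity together with the nondegeneracy coming from the local inverse relation, and the second is precisely where kernel condition (ii) is indispensable, preventing a boundary value of $F$ at an interior point of $A_0$.
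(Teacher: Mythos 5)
A preliminary remark: the paper does not prove this proposition at all --- it is recalled as a known generalization of Cartan's theorem, with the proof delegated to the references \cite{GK}, \cite{TM}, \cite{DN09}. So there is no internal proof to compare against, and your argument must stand on its own. It does: your proposal reconstructs what is essentially the standard argument from those sources. Negating compact divergence produces $x_0\in A_0$ with $F(x_0)=y_0\in\Omega_0$; passing to the limit in $g_i\circ f_i=\mathrm{id}$ and $f_i\circ g_i=\mathrm{id}$ on small neighborhoods (using kernel condition (i) for both sequences of domains) gives the local identities $G\circ F=\mathrm{id}$ and $F\circ G=\mathrm{id}$ and hence nondegeneracy at $x_0$ and $y_0$; a Hurwitz-type theorem upgrades this to nondegeneracy everywhere; kernel condition (ii) then forces $F(A_0)\subseteq\Omega_0$ and $G(\Omega_0)\subseteq A_0$; and the identity theorem globalizes the inverse relations, yielding the biholomorphism. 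You also correctly note that, since the full sequences are assumed to converge, alternative (ii) reduces to showing $F$ itself is a biholomorphism of $A_0$ onto $\Omega_0$.

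Two blemishes, neither fatal. First, the sentence ``Because $F(A_0)$ is open and contained in $\overline{\Omega_0}$, it lies in $\Omega_0$'' is false as a statement of general topology: for a slit disc $\Omega_0$, an open set inside $\overline{\Omega_0}$ can meet the slit. But you do not need it --- your own ball argument already gives the stronger conclusion. The fixed ball you produce is centered at $F(x)$; applying kernel condition (ii) to the closed concentric ball of half the radius (a compact set lying in $\Omega_i$ for all large $i$) places that closed ball, and hence $F(x)$ itself, inside $\Omega_0$. So the detour through $\overline{\Omega_0}$ should simply be deleted. Second, the ``classical Hurwitz-type rigidity for injective maps'' is standard for domains in $\mathbb C^n$, whereas here $F$ maps into a complex manifold $M$, so quoting it off the shelf is slightly loose; it is also avoidable. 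Since each $f_i$ is biholomorphic, its Jacobian determinant (computed in charts) is nowhere zero, so the scalar Hurwitz theorem --- which localizes to charts and then propagates by connectedness of $A_0$ --- shows $\det dF$ is either identically singular or invertible everywhere; your local identity $G\circ F=\mathrm{id}$ near $x_0$ rules out the former. Everywhere-invertibility of $dF$ is all that the ball-covering argument needs, and injectivity of $F$ then comes for free at the end from the global identity $G\circ F=\mathrm{id}_{A_0}$, so no separate injectivity theorem is required. With these two cosmetic repairs the proof is complete.
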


\begin{remark} \label{r1}  By \cite[Proposition $2.1$]{Ber94} or \cite[Proposition $2.2$]{DN09} and by the hypotheses of Theorem \ref{togetmodel} and Theorem \ref{togetmodelstronglypsc}, it follows that for each compact subset $K\Subset M$ and each neighborhood $U$ of $\xi_0$ in $\mathbb C^{n+1}$, there exists an integer $j_0=j_0(K)$ such that $K\subset f_j(\Omega\cap U)$ for all $j\geq j_0$. Consequently, the sequence of domains $\{f_j(\Omega\cap U)\}$ converges to $M$. 
\end{remark}

We will finish this subsection by recalling the following lemma (cf. \cite[Lemma $1.1$]{Fr83}).
\begin{lemma}[see \cite{Fr83}]\label{orbitinside}
Let $M$ be a hyperbolic manifold of complex dimension $n$. Assume that $M$ can be exhausted by $\Omega$ via an exhausting sequence $\{f_j: \Omega \to M\}$, where $\Omega$ is a bounded domain in $\mathbb{C}^n$. Suppose that there is an interior point $a \in M$ such that $f_j^{-1} (a) \to p \in \Omega$. Then, $M$ is biholomorphically equivalent to $\Omega$.
\end{lemma}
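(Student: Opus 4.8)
The plan is to produce the desired biholomorphism as a limit of the inverse maps $g_j := f_j^{-1}$ and to invoke the Cartan-type Proposition~\ref{T:7} to promote this limit to a biholomorphism. Write $p_j := f_j^{-1}(a)$, so that $p_j \to p \in \Omega$ by hypothesis. Since $\{f_j\}$ exhausts $M$, for every compact $K \subset M$ we have $K \subset f_j(\Omega)$ for all large $j$, so each $g_j = f_j^{-1}$ is defined on $K$ with values in the \emph{bounded} set $\Omega \subset \mathbb{C}^n$. Exhausting $M$ by compacta and applying Montel's theorem together with a diagonal argument, I would extract a subsequence (still denoted $g_j$) converging uniformly on compact subsets of $M$ to a holomorphic map $G \colon M \to \overline{\Omega}$. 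The crucial consequence of the hypothesis $p \in \Omega$ is that $G(a) = \lim_j g_j(a) = \lim_j p_j = p$ lies in the \emph{interior} of $\Omega$.

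Next I would show that $G$ is nondegenerate at $a$. Because each $g_j \colon f_j(\Omega) \to \Omega$ is a biholomorphism, the Kobayashi--Royden metric is preserved: $K_\Omega\big(g_j(a); dg_j(a)v\big) = K_{f_j(\Omega)}(a; v)$ for every $v \in T_aM$. Since $f_j(\Omega) \subset M$, monotonicity of the Kobayashi metric gives $K_{f_j(\Omega)}(a; v) \ge K_M(a; v)$, and the latter is strictly positive for $v \neq 0$ as $M$ is hyperbolic. Letting $j \to \infty$ and using $g_j(a) \to p$ and $dg_j(a) \to dG(a)$, together with continuity of the Kobayashi metric on the bounded domain $\Omega$, I obtain $K_\Omega\big(p; dG(a)v\big) \ge K_M(a; v) > 0$. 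Hence $dG(a)v \neq 0$ for all $v \neq 0$, so $dG(a)$ is nonsingular and $G$ is a local biholomorphism near $a$.

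To globalize, I would apply Proposition~\ref{T:7} to the biholomorphisms $f_j \colon \Omega \to \Omega_j := f_j(\Omega)$, noting that $\lim \Omega_j = M$ by Remark~\ref{r1} (or directly from the exhaustion). The hypotheses require, in addition to the convergence $g_j \to G$ already secured, that a subsequence of $\{f_j\}$ converge uniformly on compacta of $\Omega$ to a holomorphic map $F \colon \Omega \to M$. This follows from a normal-family argument: the $f_j$ are distance-decreasing for the Kobayashi distances, hence equicontinuous from $(\Omega, d_\Omega)$ to $(M, d_M)$, while the local biholomorphism of $G$ at $a$ anchors their images near $a$ and, through the exhaustion, keeps them in a relatively compact part of $M$; an Arzel\`a--Ascoli argument then yields the limit $F$. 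Proposition~\ref{T:7} now presents a dichotomy: either $\{f_j\}$ is compactly divergent, or a subsequence converges to a biholomorphism $F \colon \Omega \to M$. The first alternative is impossible because $f_j(p_j) = a$ for all $j$ with $p_j \to p$, so $\{f_j\}$ does not escape every compact set. Therefore $F \colon \Omega \to M$ is a biholomorphism with $F^{-1} = G$, proving $M \cong \Omega$.

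The main obstacle is precisely the step of upgrading the a priori merely holomorphic limit $G \colon M \to \overline{\Omega}$ into an honest biholomorphism onto $\Omega$, i.e. ruling out that the maps degenerate or that their images run off to $\partial\Omega$ (equivalently, that $\{f_j\}$ escapes to infinity in $M$). For a general bounded domain $\Omega$, which need not be complete hyperbolic, a naive maximum-principle argument does not suffice; it is exactly here that the interior hypothesis $p \in \Omega$, the nondegeneracy of $dG(a)$, and the Cartan-type Proposition~\ref{T:7} must be combined.
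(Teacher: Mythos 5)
The paper itself does not prove this lemma (it is quoted from \cite[Lemma 1.1]{Fr83}), so I am judging your argument on its own merits and against the way the paper handles the identical difficulty in its proof of Theorem \ref{togetmodel}. Your overall architecture --- Montel applied to the inverses $g_j$ to get $G$ with $G(a)=p\in\Omega$, nondegeneracy of $dG(a)$ via the Kobayashi--Royden metric, and the Cartan-type dichotomy of Proposition \ref{T:7} with compact divergence ruled out by $f_j(p_j)=a$ --- is reasonable, and those pieces are essentially correct (two small caveats: on a general bounded domain the Kobayashi--Royden metric is only upper semicontinuous, not continuous, but upper semicontinuity is exactly the direction your inequality needs and it always holds; and the positivity $F_M(a;v)>0$ for $v\ne 0$ is not the paper's definition of hyperbolicity but a theorem of Royden, which is fair to invoke). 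The genuine gap is the step supplying the other half of the hypotheses of Proposition \ref{T:7}: locally uniform convergence of a subsequence of $\{f_j\}$ on $\Omega$ to a map $F\colon\Omega\to M$. Your Arzel\`a--Ascoli argument requires, besides equicontinuity, that each orbit $\{f_j(x)\}_j$ be relatively compact in $M$. The distance-decreasing property only yields $d^K_M(f_j(x),a)\le d^K_\Omega(x,p_j)\le R(x)$, i.e. the orbits are $d^K_M$-bounded; but $M$ is merely hyperbolic --- it is not assumed complete hyperbolic, and it is not yet known to sit inside a bounded domain (that is the conclusion!) --- so $d^K_M$-bounded sets need not have compact closure, and $f_j(x)$ could a priori drift off to an ``end'' of $M$ while staying at bounded Kobayashi distance from $a$. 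The clause ``through the exhaustion, keeps them in a relatively compact part of $M$'' is an assertion, not an argument; ruling out precisely this escape is the heart of the lemma, and positing orbit precompactness is close to positing the conclusion.

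The gap can be closed, but the correct order of operations is to upgrade $G$ to a global biholomorphism onto an open subset of $\Omega$ \emph{before} touching the family $\{f_j\}$ --- this mirrors the paper's proof of Theorem \ref{togetmodel}, where the normality of the inverse family is rescued by tautness of $M_P$; here boundedness of $\Omega$ plays that role. Concretely: (i) each $Jg_j$ is nowhere zero and $JG(a)\ne 0$ by your metric argument, so Hurwitz gives $JG$ nowhere zero on $M$; (ii) a Rouch\'e/degree argument (this is \cite[Lemma 0]{FS77}, cited at exactly this spot in the paper's proof of Theorem \ref{togetmodel}) shows that a locally biholomorphic limit of maps into $\Omega$ cannot take a value in $\partial\Omega$, so $G(M)\subset\Omega$ and $G(M)$ is open; (iii) since every $g_j$ maps into the \emph{fixed} domain $\Omega$, the decreasing property and continuity of $d^K_\Omega$ on $\Omega\times\Omega$ give $d^K_\Omega\bigl(G(z_1),G(z_2)\bigr)=\lim_j d^K_{f_j(\Omega)}(z_1,z_2)\ge d^K_M(z_1,z_2)>0$ for $z_1\ne z_2$, so $G$ is injective by hyperbolicity of $M$. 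Now $M$ is biholomorphic via $G$ to the bounded domain $G(M)$, and the missing normality is free: $\tilde f_j:=G\circ f_j\colon\Omega\to G(M)$ is uniformly bounded, hence normal by Montel; moreover $\tilde f_j^{-1}=g_j\circ G^{-1}\to\mathrm{id}$ on $G(M)$ and $\tilde f_j(p_j)=p$, so your application of Proposition \ref{T:7} (now to $\tilde f_j\colon\Omega\to G(f_j(\Omega))$, with $\lim G(f_j(\Omega))=G(M)$) goes through and yields a biholomorphism $\Omega\to G(M)\cong M$. Alternatively, once (i)--(iii) are in place, any subsequential limit $\tilde F$ of $\tilde f_j$ fixes every point of $G(M)$ (because $\tilde f_j\circ\tilde f_j^{-1}=\mathrm{id}$ and $\tilde f_j^{-1}\to\mathrm{id}$), hence $\tilde F=\mathrm{id}_\Omega$ by the identity theorem, and the degree argument forces $\Omega=\tilde F(\Omega)\subset G(M)$, i.e. $G(M)=\Omega$.
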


\subsection{The $h$-extendibility }
In this subsection, we recall some definitions and notations given in \cite{Cat84, Yu95}.

Let $\Omega$ be a smooth pseudoconvex domain in $\mathbb C^{n+1}$ and $p\in \partial\Omega$. Let $\rho$ be a local defining function for $\Omega$ near $p$. Suppose that the multitype $\mathcal{M}(p)=(1,m_1,\ldots,m_n)$ is finite. (See \cite{Cat84} for the notion of multitype.)  Let us denote by $\Lambda=\left(1/m_1,\ldots,1/m_n\right)$.  Then, there are distinguished coordinates $(z,w)=(z_1,\ldots,z_n,w)$ such that $p=0$ and $\rho(z,w)$ can be expanded near $0$ as follows:
$$
\rho(z,w)=\mathrm{Re}(w)+P(z)+R(z,w),
$$ 
 where $P$ is a $\Lambda$-homogeneous plurisubharmonic polynomial that contains no pluriharmonic terms, $R$ is smooth and satisfies 
 $$
 |R(z,w)|\leq C \left( |w|+ \sum_{j=1}^n |z_j|^{m_j} \right)^\gamma,
 $$ 
 for some constant $\gamma>1$ and $C>0$. Here and in what follows, a polynomial $P$ is called $\Lambda$-homogeneous if 
 $$
 P(t^{1/m_1}z_1,t^{1/m_2}z_2, \ldots,t^{1/m_n}z_n)=P(z),\; \forall t>0, \forall z\in \mathbb C^n. 
 $$
 
\begin{define}[see \cite{NN19}]\label{def-order} The domain $M_P=\{(z,w)\in \mathbb C^n\times \mathbb C\colon \mathrm{Re}(w)+P(z)<0\}$ is called an \emph{associated model} of $\Omega$ at $p$.  A boundary point $p\in \partial \Omega$ is called \emph{$h$-extendible} if its associated model $M_P$ is \emph{$h$-extendible}, i.e., $M_P$ is of finite type (see \cite[Corollary $2.3$]{Yu94}). In this circumstance, we say that a sequence $\{\eta_j=(\alpha_j,\beta_j)\}\subset  \Omega$ \emph{converges $\Lambda$-nontangentially to $p$} if $|\mathrm{Im}(\beta_j)|\lesssim |\mathrm{dist}(\eta_j,\partial \Omega)|$ and $ \sigma(\alpha_j) \lesssim |\mathrm{dist}(\eta_j,\partial \Omega)|$, where 
$$
\sigma(z)=\sum_{k=1}^n |z_k|^{m_k}.
$$ 
\end{define}
Throughout this paper, we use $\lesssim$ and $\gtrsim$ to denote inequalities up to a positive multiplicative constant. Moreover, we use $\approx $ for the combination of $\lesssim$ and $\gtrsim$. In addition, $\mathrm{dist}(z,\partial\Omega)$ denotes the Euclidean distance from $z$ to $\partial\Omega$. Furthermore, for $\mu>0$ we denote by $\mathcal{O}(\mu,\Lambda)$ the set of  all smooth functions $f$ defined near the origin of $\mathbb C^n$ such that
$$
D^\alpha \overline{D}^\beta f(0)=0~\text{whenever}~ \sum_{j=1}^n (\alpha_j+\beta_j)\dfrac{1}{m_j} \leq \mu.
$$
If $n=1$ and $\Lambda = (1)$ then we use $\mathcal{O}(\mu)$ to denote the functions vanishing to order at least $\mu$ at the origin (cf. \cite{Cat84, Yu95}).

\section{Proofs of Theorem \ref{togetmodel} and Theorem \ref{togetmodelstronglypsc}}\label{S3}

This section is devoted to our proofs of Theorem \ref{togetmodel} and Theorem \ref{togetmodelstronglypsc}. First of all, let us recall the definition of the Kobayashi infinitesimal pseudometric and the Kobayashi pseudodistance as follows:
\begin{define} Let $M$ be a complex manifold. The Kobayashi infinitesimal pseudometric $F_M \colon M\times T^{1,0}M\to \mathbb R$ is defined by
	$$
	F_M(p,X)=\inf \left\{c>0\;|\; \exists \; f\colon \Delta \to M \;\text{holomorphic with}\; f(0)=p, f'(0)=X/c \right\},
	$$
for any $p\in M$ and $X\in T^{1,0}M$, where $\Delta $ is the unit open disk of $\mathbb C$. Moreover, the Kobayashi pseudodistance $d_M^K\colon M\times M \to \mathbb R$ is defined by
	$$
	d_M^K(p,q)=\inf_\gamma\int_0^1 F_M(\gamma(t),\gamma'(t)) dt,
	$$
for any $p,q\in M$ where the infimum is taken over all differentiable curves $\gamma:[0,1] \to M$ joining $p$ and $q$. A complex manifold $M$ is called hyperbolic if $d_M^K(p,q)$ is actually a distance, i.e., $d_M^K(p,q)>0$ whenever $p\ne q$. 
\end{define}

Next, we need the following lemma, whose proof will be given in Appendix for the convenience of the reader, and the following proposition.
\begin{lemma}\label{conti-kob} Assume that $\{D_j\}$ is a sequence of domains in $\mathbb C^{n+1}$ converging to a model $M_P$ of finite type. Then, we have
	$$
	\lim_{j\to \infty} F_{D_j}(z,X)=F_{M_P}(z,X),~\forall (z,X)\in M_P\times \mathbb C^{n+1}.
	$$
	Moreover, the convergence takes place uniformly over compact subsets of $M_P\times \mathbb C^{n+1}$.
\end{lemma}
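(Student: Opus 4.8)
The plan is to establish the two one-sided estimates $\limsup_{j\to\infty} F_{D_j}(z,X)\le F_{M_P}(z,X)$ and $\liminf_{j\to\infty} F_{D_j}(z,X)\ge F_{M_P}(z,X)$ at each fixed $(z,X)\in M_P\times\mathbb C^{n+1}$, and then to upgrade the resulting pointwise convergence to locally uniform convergence. The upper estimate is the soft direction. Fix $(z,X)$ and $\varepsilon>0$, and choose a holomorphic disc $\varphi\colon \Delta\to M_P$ with $\varphi(0)=z$ and $\varphi'(0)=X/(F_{M_P}(z,X)+\varepsilon)$. For $r<1$ the set $\varphi(\overline{\Delta_r})$, where $\Delta_r$ denotes the disc of radius $r$, is a compact subset of $M_P$, so by condition (i) of $\lim D_j=M_P$ it lies in $D_j$ for all large $j$; the reparametrised disc $\zeta\mapsto \varphi(r\zeta)$ is then a competitor for $F_{D_j}(z,X)$ and yields $F_{D_j}(z,X)\le (F_{M_P}(z,X)+\varepsilon)/r$. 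Letting $j\to\infty$, then $r\to 1$ and $\varepsilon\to 0$, gives the upper estimate.

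For the lower estimate I would argue by normal families. We may assume $z\in D_j$ for all large $j$, and pick near-extremal discs $\varphi_j\colon \Delta\to D_j$ with $\varphi_j(0)=z$ and $\varphi_j'(0)=X/c_j$, where $c_j\to c:=\liminf_j F_{D_j}(z,X)$ along a suitable subsequence; by the upper estimate the $c_j$ are bounded. The goal is to produce a limiting disc into $M_P$: if $\{\varphi_j\}$ is normal, a subsequence converges locally uniformly to a holomorphic $\varphi\colon\Delta\to\overline{M_P}$ with $\varphi(0)=z$ and $\varphi'(0)=X/c$. To see that $\varphi$ actually maps into the \emph{open} set $M_P$, note that $\rho_{M_P}:=\mathrm{Re}(w)+P(z)$ is plurisubharmonic (as $P$ is plurisubharmonic and $\mathrm{Re}(w)$ is pluriharmonic), so $\rho_{M_P}\circ\varphi$ is subharmonic, is $\le 0$, and is strictly negative at $0$; the maximum principle then forbids it from vanishing at any interior point, whence $\varphi(\Delta)\subset M_P$. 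Thus $\varphi$ is a competitor for $F_{M_P}(z,X)$ and gives $F_{M_P}(z,X)\le c$, as required (and normality simultaneously rules out $c=0$ for $X\neq 0$, since uniform local bounds on $\varphi_j$ force $|\varphi_j'(0)|=|X|/c_j$ to stay bounded).

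The crux is the normality of $\{\varphi_j\}$, that is, ruling out that the discs escape to infinity while remaining at bounded Kobayashi distance from $z$. By the distance-decreasing property each $\varphi_j(\Delta_{1/2})$ lies in the Kobayashi ball of $D_j$ about $z$ of a fixed radius, so it suffices to confine these balls to a single compact subset of $\mathbb C^{n+1}$ uniformly in $j$. This is precisely where the finite-type ($h$-extendibility) hypothesis on $M_P$ is used quantitatively: it furnishes a uniform lower bound for the Kobayashi metric of $D_j$ away from $z$, equivalently it reflects that $M_P$ is complete hyperbolic and that this completeness is stable under the convergence $D_j\to M_P$ (through the kernel condition (ii)), which prevents the escape; without finiteness of the type the liminf inequality can genuinely fail. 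I expect this uniform non-escape estimate to be the main obstacle. Finally, to pass from pointwise to locally uniform convergence on $M_P\times\mathbb C^{n+1}$, I would invoke the equicontinuity (uniform Lipschitz-type bounds) of the family $\{F_{D_j}\}$ together with the homogeneity $F_{D_j}(z,tX)=|t|\,F_{D_j}(z,X)$, so that the Arzel\`a--Ascoli theorem applies on a compact slice and forces the full family to converge uniformly to the continuous limit $F_{M_P}$.
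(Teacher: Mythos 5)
Your skeleton---upper bound by shrinking an almost-extremal disc of $M_P$ and using condition (i) of domain convergence, lower bound by a normal-families argument applied to almost-extremal discs of the $D_j$, with a plurisubharmonicity/maximum-principle argument to push the limit disc into the open model---is the same strategy the paper follows, and your upper estimate is complete and correct. The lower estimate, however, has a genuine gap exactly where you yourself locate "the main obstacle": you never prove normality of the discs $\varphi_j$, nor even that a locally uniform limit of maps into $D_j$ takes values in $\overline{M_P}$. Neither is automatic: the paper's notion of convergence $D_j\to M_P$ does not by itself prevent sequences of points $p_j\in D_j$ from clustering outside $\overline{M_P}$ (condition (ii) only constrains compact sets lying in \emph{all} $D_j$ from some index on), so the containment $\varphi(\Delta)\subset\overline{M_P}$ that your maximum-principle step presupposes already needs an argument, and your appeal to "stability of complete hyperbolicity under the kernel condition" is a heuristic, not a proof. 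The paper fills precisely this hole by invoking Proposition \ref{pro-scaling} (quoted from \cite{NN19}): any sequence of holomorphic maps $\sigma_j\colon\omega\to D_j$ with $\{\sigma_j(a)\}\Subset M_P$ for some point $a$ admits a subsequence converging locally uniformly to a holomorphic map into $M_P$ (openness of the target included). That proposition is available in the paper; had you invoked it, your liminf argument would close, but as written it does not.

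Your final step is also not justified: upgrading pointwise to locally uniform convergence via equicontinuity and Arzel\`a--Ascoli requires uniform (in $j$) Lipschitz-type bounds for $F_{D_j}$ on compact subsets of $M_P\times\mathbb C^{n+1}$, which you assert but do not prove; note that for a general domain $D_j$ the function $F_{D_j}$ need not even be continuous (only upper semicontinuous; continuity is a tautness phenomenon), so equicontinuity of the family is far from free. The paper sidesteps this entirely: it proves the uniform statement directly by contradiction, running both one-sided estimates along arbitrary convergent sequences $(z_{j_\ell},X_{j_\ell})\to(z_0,X_0)$ in a fixed compact set, and using the continuity of $F_{M_P}$, which follows from the tautness of the finite type model $M_P$ (Yu's theorem). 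Both of your one-sided arguments do adapt to moving base points---for the upper bound one translates and tilts the model disc, e.g. $\psi(\zeta)=\varphi((1-\delta)\zeta)+(z_{j_\ell}-z_0)+\lambda(1-\delta)(X_{j_\ell}-X_0)\zeta$ as the paper (essentially) does---and recasting your proof in that sequential form would simultaneously repair the uniformity step and remove any need for Arzel\`a--Ascoli.
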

 \begin{proposition}[see \cite{NN19}]\label{pro-scaling} Assume that $\{D_j\}$ is a sequence of domains in $\mathbb C^{n+1}$ converging to a model $M_P$ of finite type. Assume also that $\omega$ is a domain in $\mathbb C^k$ and $\sigma_j: \omega \to D_j$ is a sequence of holomorphic mappings such that $\{\sigma_j(a)\}\Subset M_P$ for some $a\in \omega$. Then $\{\sigma_j\}$ contains a subsequence that converges locally uniformly to a holomorphic map $\sigma: \omega \to M_P$. 
\end{proposition}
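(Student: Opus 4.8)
The plan is to run a normal-family argument governed by the Kobayashi metric, whose two engines are Lemma~\ref{conti-kob} (the locally uniform convergence $F_{D_j}\to F_{M_P}$) and the fact that a model of finite type is complete hyperbolic, so that closed Kobayashi balls in $M_P$ are compact. The goal I would aim for is the statement that $\{\sigma_j\}$ is \emph{locally uniformly bounded into $M_P$}, by which I mean: for every compact $K\Subset\omega$ there exist a compact set $L\Subset M_P$ and an index $j_0$ with $\sigma_j(K)\subset L$ for all $j\ge j_0$. Once this is established the conclusion is immediate: viewing the $\sigma_j$ (for $j\ge j_0$) as $\mathbb C^{n+1}$-valued holomorphic maps taking values in the bounded set $L$, the classical Montel theorem together with a diagonal extraction over an exhaustion of $\omega$ produces a subsequence converging locally uniformly to a holomorphic map $\sigma\colon\omega\to\mathbb C^{n+1}$; since each such $L$ lies in the interior of $M_P$, the limit satisfies $\sigma(\omega)\subset M_P$, as required.

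The heart of the matter is therefore the boundedness claim, and here is how I would obtain it. Fix $K\Subset\omega$ and set $K_0:=\overline{\{\sigma_j(a)\}}$, which is compact in $M_P$ by hypothesis. Since $\omega$ is connected, the Kobayashi pseudodistance $d_\omega^K$ is finite and continuous, so $R:=\sup_{z\in K} d_\omega^K(a,z)<\infty$. I would choose $\rho>2(R+1)$ and put $L:=\{p\in M_P\colon d_{M_P}^K(p,K_0)\le\rho\}$, which is compact because $M_P$ is complete hyperbolic. By Lemma~\ref{conti-kob} and the positivity of $F_{M_P}$ on the compact set of unit directions over $L$, there is a $j_0$ such that for $j\ge j_0$ one has $L\subset D_j$ and $F_{D_j}\ge\tfrac12 F_{M_P}$ throughout $L$. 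Now fix $z\in K$ and a path $\gamma\colon[0,1]\to\omega$ from $a$ to $z$ of $F_\omega$-length at most $R+1$. Its image $\Gamma:=\sigma_j\circ\gamma$ is a path in $D_j$, and the infinitesimal distance-decreasing property of the holomorphic map $\sigma_j\colon\omega\to D_j$ bounds the $F_{D_j}$-length of $\Gamma$ by $R+1$. I would then run a first-exit argument: as long as $\Gamma$ stays in $L$ one has $F_{D_j}\ge\tfrac12 F_{M_P}$ there, so the $F_{M_P}$-length accumulated is at most $2(R+1)<\rho$; since $\Gamma(0)=\sigma_j(a)\in K_0$, the point $\Gamma(t)$ can never reach the level $d_{M_P}^K(\cdot,K_0)=\rho$, so $\Gamma$ does not cross $\partial L$ and in particular $\sigma_j(z)=\Gamma(1)\in L$. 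As $z\in K$ was arbitrary, this yields $\sigma_j(K)\subset L$ for all $j\ge j_0$.

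The step I expect to be the main obstacle is precisely this first-exit comparison, because the maps $\sigma_j$ take values in the \emph{varying} domains $D_j$ rather than in $M_P$, and a priori an image path could wander out of $M_P$ while remaining in $D_j$. The device that resolves this is to monitor only the first time $\Gamma$ meets $\partial L$: up to that instant $\Gamma$ lies inside the fixed compact $L\subset M_P$, which is exactly where Lemma~\ref{conti-kob} supplies the comparison $F_{D_j}\gtrsim F_{M_P}$, while the compactness of the closed Kobayashi ball $L$ (that is, completeness of $M_P$, guaranteed by finiteness of type) is what makes the ball ``large enough'' that a path of bounded length cannot escape it. Thus the two hypotheses of the proposition, convergence of the domains and finiteness of type of $M_P$, enter at exactly this point; everything else is the soft machinery of the Kobayashi metric and Montel's theorem.
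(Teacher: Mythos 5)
Your overall strategy --- reduce the proposition to the claim that for every compact $K\Subset\omega$ there exist a compact $L\Subset M_P$ and an index $j_0$ with $\sigma_j(K)\subset L$ for $j\ge j_0$, then conclude by Montel plus a diagonal extraction --- is sound, and your first-exit argument is cleanly executed. The trouble lies in the two inputs that drive it. First, and fatally within this paper's logical architecture, your key estimate $F_{D_j}\ge\tfrac12 F_{M_P}$ on the compact set $L$ is imported from Lemma \ref{conti-kob}; but the paper's proof of that lemma (in the Appendix) invokes Proposition \ref{pro-scaling} itself, precisely in the lower-semicontinuity direction (inequality (\ref{eq136-1})), where limits of the extremal discs $\varphi_{j_\ell}\in \mathrm{Hol}(\Delta, D_{j_\ell})$ are extracted by applying Proposition \ref{pro-scaling} with $\omega=\Delta$, $k=1$. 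So the half of Lemma \ref{conti-kob} you rely on is exactly the half whose only available proof presupposes the statement you are proving; your argument is circular unless you supply an independent proof of the uniform lower bound $\liminf_j F_{D_j}\ge F_{M_P}$ on compact subsets of $M_P$ --- and that is where the real work of the proposition is hiding.

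Second, you assert that a finite-type model $M_P$ is complete hyperbolic, so that the closed Kobayashi ball $L=\{p\in M_P \colon d^K_{M_P}(p,K_0)\le \rho\}$ is compact. What the paper actually provides (via \cite{Yu95}) is tautness of $M_P$, which is strictly weaker: tautness does not in general imply compactness of closed Kobayashi balls, and completeness of these unbounded models is a genuinely nontrivial fact requiring its own proof or citation (it can be derived from plurisubharmonic peak/antipeak functions at infinity in the spirit of Yu and Gaussier, but it is not free, and nothing quoted in this paper delivers it). For comparison, note that the paper does not prove Proposition \ref{pro-scaling} at all; it quotes it from \cite{NN19}, where it is established by scaling/normal-family arguments in the style of Berteloot resting on tautness and peak-function localization --- machinery designed precisely so that no a priori comparison between the Kobayashi metrics of the varying domains $D_j$ and that of $M_P$ is needed. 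If you want to keep your Kobayashi-ball approach, you must first prove (independently of Lemma \ref{conti-kob}) both the metric lower bound and the completeness of $M_P$; as written, the proposal assumes the hard part.
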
 

Now we are ready to prove Theorem \ref{togetmodel} and Theorem \ref{togetmodelstronglypsc}.
\begin{proof}[Proof of Theorem \ref{togetmodel}]
Let $\rho$ be a local defining function for $\Omega$ near $\xi_0$ and the multitype $\mathcal{M}(\xi_0)=(1,m_1,\ldots,m_n)$ is finite. In what follows, denote by $\Lambda=(1/m_1,\ldots,1/m_n)$. Since $\xi_0$ is a $h$-extendible point, there exist local holomorphic coordinates $(z,w)$ in which $\xi_0=0$ and $\Omega$ can be described in a neighborhood $U_0$ of $0$ as follows: 
 $$
 \Omega\cap U_0=\left\{\rho(z,w)=\mathrm{Re}(w)+ P(z) +R_1(z) + R_2(\mathrm{Im} w)+(\mathrm{Im} w) R(z)<0\right\},
 $$ 
where $P$ is a $\Lambda$-homogeneous plurisubharmonic real-valued polynomial containing no pluriharmonic terms, $R_1\in \mathcal{O}(1, \Lambda),R\in \mathcal{O}(1/2, \Lambda) $, and $R_2\in \mathcal{O}(2)$. (See the proof of Theorem $1.1$ in \cite{NN19} or  the proof of Lemma $4.11$ in \cite{Yu95}.)
 
By assumption, there exists a point $a\in M$ such that the sequence $\eta_j:=f^{-1}_j(a)$ converges $\Lambda$-nontangentially to $\xi_0$. Without loss of generality, we may assume that the sequence $\{\eta_j\}\subset \Omega\cap U_0$ and we write $\eta_j=(\alpha_j,\beta_j)=(\alpha_{j1},\ldots,\alpha_{jn},\beta_j)$ for all $j$. Then, the sequence $\{\eta_j:=f^{-1}(a)\}$ has the following properties: 
\begin{itemize}
\item[(a)] $|\mathrm{Im}(\beta_j)|\lesssim |\mathrm{dist}(\eta_j,\partial \Omega)|$;
\item[(b)] $|\alpha_{jk}|^{m_k}\lesssim |\mathrm{dist}(\eta_j,\partial \Omega)|$ for $1\leq k\leq n$.
\end{itemize}

For the sequence $\{\eta_j=(\alpha_j,\beta_j)\}$, we associate with a sequence of points $\eta_j'=(\alpha_{j1}, \ldots, \alpha_{jn},\beta_j +\epsilon_j)$, where $\epsilon_j>0$, such that $\eta_j'$ is in the hypersurface $\{\rho=0\}$ for all $j$. We note that $\epsilon_j\approx \mathrm{dist}(\eta_j,\partial \Omega)$. Now let us consider the sequences of dilations $\Delta^{\epsilon_j}$ and translations $L_{\eta_j'}$, defined respectively by
$$
\Delta^{\epsilon_j}(z_1,\ldots,z_n,w)=\left(\frac{z_1}{\epsilon_j^{1/m_1}},\ldots,\frac{z_n}{\epsilon_j^{1/m_n}},\frac{w}{\epsilon_j}\right)
$$
and 
$$
L_{\eta_j'}(z,w)=(z,w)-\eta'_j=(z-\alpha'_j,w-\beta'_j).
$$
Under the change of variables $(\tilde z,\tilde w):=\Delta^{\epsilon_j}\circ L_{\eta'_j}(z,w)$, i.e.,
\[
\begin{cases}
w-\beta'_j= \epsilon_j\tilde{w}\\
z_k-\alpha'_{j k}=\epsilon_j^{1/m_k}\tilde{z}_k,\, k=1,\ldots,n,
\end{cases}
\]
one can see that $\Delta^{\epsilon_j}\circ L_{\eta_j'}(\alpha_j,\beta_j)=(0,\cdots,0,-1)$ for all $j$. Moreover, as in \cite{NN19}, after taking a subsequence if necessary, we may assume that the sequence of domains $\Omega_j:=\Delta^{\epsilon_j}\circ L_{\eta_j'}(\Omega\cap U_0) $ converges to the following model
$$
M_{P,\alpha}:=\left \{(\tilde z,\tilde w)\in \mathbb C^n\times\mathbb C\colon \mathrm{Re}(\tilde w)+P(\tilde z+\alpha)-P(\alpha)<0\right\},
$$
which is obviously biholomorphically equivalent to the model $M_P$. Without loss of generality, in what follows we always assume that $\{\Omega_j\}$ converges to $M_P$. 

Now we first consider the sequence of biholomorphisms $F_j:= T_j\circ f_j^{-1}\colon M\supset f_j(\Omega\cap U_0)\to \Omega_j$, where $T_j:=\Delta^{\epsilon_j}\circ L_{\eta_j'}$. Since $F_j(a)=(0',-1)$ and notice that $f_j(\Omega\cap U_0)$ converges to $M$ as $j\to \infty$ (see Remark \ref{r1}), by Proposition \ref{pro-scaling}, without loss of generality, we may assume that the sequence $F_j$ converges uniformly on on every compact subset of $M$ to a holomorphic map $F$ from $M$ to $\mathbb C^{n+1}$. Note that $F(M)$ contains a neighborhood of $(0',-1)$ and $F(M)\subset \overline{M_P}$. 
%Applying the maximum principle to the plurisubharmonic function $\rho(z,w)=\mathrm{Re}(w)+P(z)$, one must have $F(M)\subset M_P$.

 Since  $F_j$ is normal, by the Cauchy theorem it follows that $\{J(F_j)\}$ converges uniformly on every compact subsets of $M$ to $J(F)$, where $J(F)$ denotes the Jacobian determinant of $F$. However, by the Cartan theorem, $J(F_j)(z)$ is nowhere zero for any $j$ because $F_j$ is a biholomorphism. Then, the Hurwitz theorem implies that $J(F)$ is a zero function or nowhere zero. In the case that $JF\equiv 0$, $F$ is regular at no point of $M$. As $F(M)$ contains a neighborhood of $(0',-1)$, the Sard theorem shows that $F$ is regular outside a proper subvariety of $M$, which is a contradiction. This yields $JF$ is nowhere zero and  hence $F$ is regular everywhere on $M$. By \cite[Lemma 0]{FS77}, it follows that $F(M)$ is open and $F(M)\subset M_P$.
 
 Next, we shall prove that $F$ is one-to-one. Indeed, let $z_1, z_2\in M$ be arbitrary. Fix a compact subset $L \Subset M$ such that $z_1,z_2\in L$. Then, by Remark \ref{r1} there is a $j_0(L)>0$ such that $L\subset f_j(\Omega\cap U_0)$ and $F_j(L)\subset K\Subset M_P$ for all $j>j_0(L)$, where $K$ is a compact subset of $M_P$. By Lemma \ref{conti-kob} and the decreasing property of Kobayashi distance, one has
 \begin{align*}
d^K_M(z_1,z_2)&\leq d^K_{f_j(\Omega\cap U_0)}(z_1,z_2)=d^K_{\Omega_j}(F_j(z_1),F_j(z_2)))\leq C \cdot d^K_{M_P}(F_j(z_1),F_j(z_2))\\
&\leq C \left( d^K_{M_P}(F(z_1),F(z_2))+ d^K_{M_P}(F_j(z_1),F(z_1))+d^K_{M_P}(F_j(z_2),F(z_2))\right),
\end{align*}
where $C>1$ is a positive constant. Letting $j\to \infty$, we obtain 
$$
d^K_M(z_1,z_2)\leq  C \cdot d^K_{M_P}(F(z_1),F(z_2)). 
$$
Since $M$ is hyperbolic, it follows that if $F(z_1)=F(z_2)$, then $z_1=z_2$. Consequently, $F$ is one-to-one, as desired.

Finally, because of the biholomorphism from $M$  to $F(M)\subset M_P$ and the tautness of $M_P$ (cf. \cite{Yu95}),  it follows that the sequence $ F_j^{-1}=f_j\circ T_j^{-1} \colon T_j(\Omega \cap U_0)\to   f_j(\Omega \cap U_0) \subset M$ is also normal. Moreover, since $T_j\circ f_j^{-1}(a)=(0',-1)\in M_P$, it follows that the sequence $T_j\circ f_j^{-1}$ is not compactly divergent. Therefore, by Proposition \ref{T:7}, after taking some subsequence we may assume that $T_j\circ f_j^{-1}$ converges uniformly on every compact subset of $M$ to a biholomorphism from $M$ onto $M_P$. Hence, the proof is complete.
\end{proof}
\begin{remark}
If $M$ is a bounded domain in $\mathbb C^{n+1}$, the normality of the sequence $F_j^{-1}$ can be shown by using the Montel theorem. Thus, the proof of Theorem \ref{togetmodel} simply follows from Proposition \ref{T:7}.  
\end{remark}

\begin{proof}[Proof of Theorem \ref{togetmodelstronglypsc}]
Let $\rho$ be a local defining function for $\Omega$ near $\xi_0$. We may assume that $\xi_0=0$.  After a linear change of coordinates, one can find local holomorphic coordinates $(\tilde z,\tilde w)=(\tilde z_1,\cdots, \tilde z_n,\tilde w)$, defined on a neighborhood $U_0$ of $\xi_0$, such that 
\begin{equation*}
\begin{split}
\rho(\tilde z,\tilde w)=\mathrm{Re}(\tilde w)+ \sum_{j=1}^{n}|\tilde z_j|^2+ O(|\tilde w| \|\tilde z\|+\|\tilde z\|^3)
\end{split}
\end{equation*}

By \cite[Proposition 3.1]{DN09} (or Subsection $3.1$ in \cite{Ber06} for the case $n=1$), for each point $\eta$ in a small neighborhood of the origin, there exists an automorphism $\Phi_\eta$ of $\mathbb C^n$ such that
\begin{equation*} 
\begin{split}
\rho(\Phi_{\eta}^{-1}(z,w))-\rho(\eta)=\mathrm{Re}(w)+  \sum_{j=1}^{n}|z_j|^2+ O(|w| \|z\|+\|z\|^3).
\end{split}
\end{equation*}
 Let us define an anisotropic dilation $\Delta^\epsilon$ by 
$$
\Delta^\epsilon (z_1,\cdots,z_n,w)= \left(\frac{z_1}{\sqrt{\epsilon}},\cdots,\frac{z_{n}}{\sqrt{\epsilon}},\frac{w}{\epsilon}\right).
$$
For each $\eta\in \partial \Omega$, if we set $\rho_\eta^\epsilon(z, w)=\epsilon^{-1}\rho\circ \Phi_\eta^{-1}\circ( \Delta^\epsilon)^{-1}(z,w)$, then 
\begin{equation*}
\rho_\eta^\epsilon(z, w)= \mathrm{Re}(w)+\sum_{j=1}^{n}|z_j|^2+O(\sqrt{\epsilon}).
\end{equation*}

By assumption, the sequence $\eta_j:=f^{-1}_j(a)$ converges to $\xi_0$. Then, we associate with a sequence of points  ${\eta}_j'=(\eta_{j1}, \cdots, \eta_{jn},\eta_{j(n+1)}+\epsilon_j)$, $ \epsilon_j>0$, such that ${\eta}_j'$ is in the hypersurface $\{\rho=0\}$. Then $ \Delta^{\epsilon_j}\circ \Phi_{{\eta'}_j}({\eta}_j)=(0,\cdots,0,-1)$ and one can see that $ \Delta^{\epsilon_j}\circ \Phi_{{\eta'}_j}(\{\rho=0\}) $ is defined by an equation of the form
\begin{equation*}
\begin{split}
\mathrm{Re}(w)+\sum_{j=1}^{n}|z_j|^2+O(\sqrt{\epsilon_j})=0.
\end{split}
\end{equation*}
Therefore, it follows that, after taking a subsequence if necessary, $\Omega_j:=\Delta^{\epsilon_j}\circ \Phi_{{\eta'}_p}(U_0^-)$ converges to the following domain
\begin{equation}\label{Eq29} 
\mathcal{E}:=\{\hat\rho:= \mathrm{Re}(w)+\sum_{j=1}^{n}|z_j|^2<0\},
\end{equation}
which is biholomorphically equivalent to the unit ball $\mathbb B^{n+1}$. 

Now let us consider the sequence of biholomorphisms $F_j:= T_j\circ f_j^{-1} \colon M\supset f_j(\Omega \cap U_0)\to T_j(\Omega \cap U_0)$, where $T_j:= \Delta^{\epsilon_j}\circ \Phi_{{\eta'}_j}$. Since $F_j(a)=(0',-1)$, by \cite[Theorem 3.11]{DN09}, without loss of generality, we may assume that the sequence $F_j$ converges uniformly on every compact subset of $M$ to a holomorphic map $F$ from $M$ to $\mathbb C^{n+1}$. Note that $F(M)$ contains a neighborhood of $(0',-1)$ and $F(M)\subset \overline{M_P}$. Following the argument as in the proof of Theorem \ref{togetmodel}, we conclude that  $F$ is a biholomorphism from $M$ onto $\mathcal{E}$, and thus  $M$ is biholomorphically equivalent to $\mathbb B^{n+1}$, as desired.
\end{proof}

By Lemma \ref{orbitinside} and Theorem \ref{togetmodelstronglypsc}, we obtain the following corollary, proved by F. S. Deng and X. J. Zhang \cite[Theorem 2.4]{DZ19} and by B. L. Fridman \cite[Theorem I]{Fr83}.
\begin{corollary} \label{str-psc-ex} Let $D$ be a bounded strictly pseudoconvex domain in $\mathbb C^n$ with $\mathcal{C}^2$-smooth boundary. If a bounded domain $\Omega$ can be exhausted by $D$, then $\Omega$ is biholomorphically equivalent to $D$ or the unit ball $\mathbb B^n$.
\end{corollary}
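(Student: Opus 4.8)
The plan is to deduce the corollary from the dichotomy already established in Lemma \ref{orbitinside} and Theorem \ref{togetmodelstronglypsc}, via a case analysis on where the preimages of a fixed point accumulate. First I would set up the identification. Since $\Omega$ is a bounded domain in $\mathbb C^n$, it is a hyperbolic complex manifold, and the hypothesis that $D$ exhausts $\Omega$ furnishes an exhausting sequence $\{f_j \colon D \to \Omega\}$ of holomorphic embeddings with $f_j(D) \subset f_{j+1}(D)$ and $\bigcup_j f_j(D) = \Omega$. Thus $\Omega$ plays the role of the exhausted manifold $M$ and $D$ plays the role of the exhausting domain in both earlier results.

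Next I would fix an interior point $a \in \Omega$ and examine the sequence $\eta_j := f_j^{-1}(a) \in D$. Because $D$ is bounded, after passing to a subsequence we may assume $\eta_j \to p \in \overline{D}$. The argument then splits according to whether the limit $p$ lies in the interior of $D$ or on its boundary.

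If $p \in D$, then Lemma \ref{orbitinside} applies directly (with the exhausted manifold $\Omega$ and the exhausting domain $D$), yielding that $\Omega$ is biholomorphically equivalent to $D$. If instead $p \in \partial D$, then the strict pseudoconvexity of $D$ together with its $\mathcal C^2$-smooth boundary guarantees that $p$ is a strongly pseudoconvex boundary point. Hence Theorem \ref{togetmodelstronglypsc} applies; I would emphasize that it is precisely this theorem, rather than Theorem \ref{togetmodel}, that is needed here, since at a strongly pseudoconvex point no $\Lambda$-nontangential hypothesis on the convergence $\eta_j \to p$ is required. We thereby conclude that $\Omega$ is biholomorphically equivalent to $\mathbb B^n$.

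The two cases together give the asserted alternative. There is no genuine obstacle in this argument: its entire content lies in the two invoked results, and the only points to verify are the routine facts that $\Omega$ is hyperbolic, that a convergent subsequence of $\{\eta_j\}$ exists by the boundedness of $D$, and that every boundary point of a $\mathcal C^2$-smooth strictly pseudoconvex domain is strongly pseudoconvex. The only mild care required is to match the dimension conventions, applying Theorem \ref{togetmodelstronglypsc} with ambient dimension $n$ in place of $n+1$, which produces $\mathbb B^n$ rather than $\mathbb B^{n+1}$.
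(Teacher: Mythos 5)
Your proposal is correct and is essentially the paper's own argument: the authors state that the corollary follows from Lemma \ref{orbitinside} and Theorem \ref{togetmodelstronglypsc}, which is exactly the dichotomy you spell out (limit point of $f_j^{-1}(a)$ interior gives $\Omega \cong D$, limit point on the boundary is strongly pseudoconvex and gives $\Omega \cong \mathbb B^n$). Your write-up simply makes explicit the routine verifications (hyperbolicity of $\Omega$, compactness of $\overline D$, dimension shift from $n+1$ to $n$) that the paper leaves implicit.
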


\section{Exhausting a complex manifold by a general ellipsoid}\label{S4}

In this section, we are going to prove that if a complex manifold $M$ can be exhausted by a general ellipsoid $D_P$ (see the definition of $D_P$ below), then $M$ is biholomorphically equivalent to either $D_P$ or the unit ball $B^n$. 

First of all, let us fix $n$ positive integers $m_1,\ldots, m_{n-1}$ and denote by $\Lambda:=\left(\frac{1}{m_1}, \ldots, \frac{1}{m_{n-1}}\right)$. We assign weights $\frac{1}{m_1}, \ldots, \frac{1}{m_{n-1}}, 1$ to $z_1,\ldots,z_n$. For an $(n-1)$-tuple $K = (k_1,\ldots,k_{n-1}) \in \mathbb{Z}^{n-1}_{\geq 0}$, denote the weight of $K$ by $$wt(K) := \sum_{j=1}^{k-1} \dfrac{k_j}{m_j}.$$

Next, we consider the general ellipsoid $D_P$ in $\mathbb C^n\;(n\geq2)$, defined by
\begin{equation*}
\begin{split}
D_P &:=\{(z',z_n)\in \mathbb C^n\colon |z_n|^2+P(z')<1\},
\end{split}
\end{equation*}
where 
\begin{equation}\label{series expression of P on D_P}
P(z')=\sum_{wt(K)=wt(L)=1/2} a_{KL} {z'}^K  \bar{z'}^L,
\end{equation}
where $a_{KL}\in \mathbb C$ with $a_{KL}=\bar{a}_{LK}$, satisfying that $P(z')>0$ whenever $z' \in \mathbb{C}^{n-1} \setminus \{0'\}$. We would like to emphasize here that the polynomial $P$ given in (\ref{series expression of P on D_P}) is $\Lambda$-homogeneous and the assumption that $P(z')>0$ whenever $z'\ne 0$ ensures that $D_P$ is bounded in $\mathbb{C}^n$ (cf. \cite[Lemma 6]{NNTK19}). Moreover, since $P(z')>0$ for $z'\ne 0$ and by the $\Lambda$-homogeneity, there are two constants $c_1,c_2>0$ such that
$$
c_1 \sigma_\Lambda(z') \leq P(z')\leq c_2 \sigma_\Lambda(z'), \; \forall z'\in \mathbb C^{n-1},
$$
where $\sigma_\Lambda(z')=|z_1|^{m_1}+\cdots+|z_{n-1}|^{m_{n-1}}$. In addition, $D_P$ is called a WB-domain if it is strongly pseudoconvex at every boundary point outside the set $\{(0',e^{i\theta})\colon \theta\in \mathbb R\}$ (cf. \cite{AGK16}).

Now we prove the following proposition.

\begin{proposition}  \label{generalellipsoid} 	Let $M$ be a $n$-dimensional complex hyperbolic manifold. Suppose that $M$ can be exhausted by the general ellipsoid $D_P$ via an exhausting sequence $\{f_j: D_P \to M\}$. If $D_P$ is a $WB$-domain, then $M$ is biholomorphically equivalent to either $D_P$ or the unit ball $\mathbb{B}^n$.
\end{proposition}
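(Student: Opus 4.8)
The plan is to fix a point $a \in M$ and exploit the boundedness of $D_P$ to reduce everything to the two cases already handled in the paper. Since $D_P$ is bounded, the sequence $\{f_j^{-1}(a)\}$ admits a convergent subsequence, and after passing to it I may assume $f_j^{-1}(a) \to p \in \overline{D_P}$. The argument then splits according to whether $p$ lies in the interior or on the boundary. If $p \in D_P$, then Lemma \ref{orbitinside} applies directly (with $\Omega = D_P$ and $n$ the complex dimension of $M$) and yields that $M$ is biholomorphically equivalent to $D_P$. So the entire content lies in the boundary case $p \in \partial D_P$, where I must show $M \cong \mathbb{B}^n$.

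Now suppose $p \in \partial D_P$. The key structural input is the WB-hypothesis: $D_P$ is strongly pseudoconvex at every boundary point outside the circle $\{(0', e^{i\theta}) : \theta \in \mathbb{R}\}$. So the first sub-case is when $p$ is a strongly pseudoconvex boundary point, i.e. $p \notin \{(0',e^{i\theta})\}$. Here $\partial D_P$ is $\mathcal{C}^2$-smooth (in fact real-analytic) near $p$, so Theorem \ref{togetmodelstronglypsc} applies verbatim with $\xi_0 = p$: the hypotheses require only that $\eta_j := f_j^{-1}(a) \to \xi_0$, with no nontangentiality assumption at a strongly pseudoconvex point, and the conclusion is exactly $M \cong \mathbb{B}^n$.

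The remaining and genuinely delicate sub-case is when $p$ lies on the exceptional circle, i.e. $p = (0', e^{i\theta_0})$ for some $\theta_0$. The strategy here is to invoke Theorem \ref{togetmodel}, for which I must verify two things: that $p$ is an $h$-extendible boundary point, and that the convergence $f_j^{-1}(a) \to p$ can be arranged to be $\Lambda$-nontangential. For the first, the $\Lambda$-homogeneity of $P$ together with the bound $c_1 \sigma_\Lambda(z') \leq P(z') \leq c_2 \sigma_\Lambda(z')$ shows that after a rotation sending $p$ to $(0',1)$ and passing to the defining function $\mathrm{Re}(w) + P(z')$ in appropriate local coordinates, the associated model $M_P$ is precisely a model of finite type, so $p$ is $h$-extendible with multitype determined by $(m_1,\ldots,m_{n-1})$. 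Once Theorem \ref{togetmodel} is available, its conclusion gives $M \cong M_P$, the associated model at $p$; and by the explicit form of $P$ this model is biholomorphic to $D_P$ itself (the ellipsoid and its model at the weakly pseudoconvex circle agree up to the standard unbounded realization), which feeds back into the $D_P$ alternative of the conclusion.

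The main obstacle is the nontangentiality requirement in Theorem \ref{togetmodel}: a priori $f_j^{-1}(a)$ could approach the exceptional circle tangentially, and then that theorem does not apply as stated. The honest way to handle this is to observe that the orbit point $\eta_j = f_j^{-1}(a)$ cannot drift tangentially along the weakly pseudoconvex directions, because $a$ is a \emph{fixed} interior point of $M$ and the biholomorphisms $f_j$ are compatible via the exhaustion; one controls the approach using the Kobayashi metric estimates near $p$ (the metric blows up faster in the complex-tangential weak directions than normally), forcing $\sigma(\alpha_j) \lesssim \mathrm{dist}(\eta_j, \partial D_P)$ and $|\mathrm{Im}\,\beta_j| \lesssim \mathrm{dist}(\eta_j,\partial D_P)$ after the rotation. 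I expect this metric comparison — showing the orbit stays $\Lambda$-nontangential — to be the crux of the proof; the two clean cases (interior limit, strongly pseudoconvex limit) are immediate from Lemma \ref{orbitinside} and Theorem \ref{togetmodelstronglypsc} respectively.
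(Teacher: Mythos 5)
Your interior case and your strongly pseudoconvex boundary case are exactly the paper's Case 1 and Subcase 1, and they are fine. The gap is in your treatment of the weak point $p=(0',e^{i\theta})$: the ``crux'' you defer to --- showing that the orbit $\eta_j=f_j^{-1}(a)$ is automatically forced to approach $p$ $\Lambda$-nontangentially --- is not merely unproven, it is false, so no Kobayashi metric estimate can deliver it. Nothing in the exhaustion hypothesis constrains the direction of approach: if $\{f_j\}$ is an exhausting sequence and $\psi_j\in\mathrm{Aut}(D_P)$ are the noncompact automorphisms of $D_P$ (the maps $\psi_j$ appearing in the paper), then $\{f_j\circ\psi_j\}$ is again an exhausting sequence of the same $M$, and by choosing $\psi_j$ suitably one turns an orbit converging to a strongly pseudoconvex point into an orbit converging \emph{tangentially} to the exceptional circle. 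More decisively: $D_P$ can exhaust $\mathbb{B}^n$ (the paper's remark citing \cite{FM95}), and for such an exhaustion the orbit can converge neither to an interior point nor $\Lambda$-nontangentially to the weak point --- either would force $\mathbb{B}^n\cong D_P$ by your own argument --- so tangential approach to the circle (or approach to a strongly pseudoconvex point) is exactly what must happen. In the tangential case the correct conclusion is $M\cong\mathbb{B}^n$, not $M\cong M_P\cong D_P$, so your plan of always reducing Subcase 2 to Theorem \ref{togetmodel} cannot work even in principle: that theorem's hypothesis genuinely fails on one branch of the dichotomy, and that branch produces the other alternative of the proposition.

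What the paper does instead is to handle the weak point using the automorphism group of $D_P$ rather than Theorem \ref{togetmodel} (which is never invoked). Writing $f_j^{-1}(q)=(a_j',a_{jn})$, one composes with $\psi_j\in\mathrm{Aut}(D_P)$ so that the orbit becomes $\psi_j\circ f_j^{-1}(q)=(b_j,0)$ with $P(b_j)=P(a_j')/(1-|a_{jn}|^2)$. The tangential/nontangential dichotomy then translates into a clean dichotomy on $P(b_j)$: if the approach is $\Lambda$-nontangential, then $P(b_j)\leq C/(1+C)<1$, so $(b_j,0)$ converges to an \emph{interior} point of $D_P$ and Lemma \ref{orbitinside} (applied to the exhausting sequence $\{f_j\circ\psi_j^{-1}\}$) gives $M\cong D_P$; if the approach is not $\Lambda$-nontangential, then $P(b_j)\to 1$, so $(b_j,0)$ converges to a boundary point $(b,0)$ with $P(b)=1$, $b\neq 0'$, which is strongly pseudoconvex by the WB hypothesis, and Theorem \ref{togetmodelstronglypsc} gives $M\cong\mathbb{B}^n$. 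If you want to salvage your outline, replace your forced-nontangentiality claim by this renormalization argument; the rest of your structure then goes through.
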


\begin{remark} 
The possibility that $M$ is biholomorphic onto the unit ball $\mathbb B^n$ is not excluded because  $D_P$ can exhaust the unit ball $\mathbb B^n$ by \cite[Corollary $1.4$]{FM95}.
\end{remark}
\begin{proof}[Proof of Proposition \ref{generalellipsoid}]
Let $q$ be an arbitrary point in $M$. Then, thanks to the boundedness of $D_P$, after passing to a subsequence if necessary we may assume that the sequence $\{f_j^{-1}(q)\}_{j=1}^{\infty}$ converges to a point $p\in \overline{D_P}$ as $j \to \infty$. 

\smallskip
We now divide the argument into two cases as follows:

\smallskip
\noindent
{\bf Case 1.} $f_j^{-1}(q)\to p\in D_P$. Then, it follows from Lemma \ref{orbitinside} that $M$ is biholomorphically equivalent to $D_P$.

\medskip

\noindent
{\bf Case 2.}  $f_j^{-1}(q)\to p\in\partial D_P$. Let us write $f_j^{-1}(q)=(a_j', a_{jn})\in D_P$ and $p=(a',a_n)\in \partial D_P$. As in \cite{NNTK19}, for each $j\in \mathbb N^*$ we consider $\psi_j\in \mathrm{Aut}(D_P)$, defined by  
$$
\psi_j(z)=\left( \frac{(1-|a_{jn}|^2)^{1/m_1}}{(1-\bar{a}_{jn}z_n)^{2/m_1}} z_1,\ldots,  \frac{(1-|a_{jn}|^2)^{1/m_{n-1}}}{(1-\bar{a}_{jn}z_n)^{2/m_{n-1}}} z_{n-1}, \frac{z_n-a_{jn}}{1-\bar{a}_{jn} z_n}\right).
$$
Then $\psi_j\circ f_j(q)=(b_j,0)$, where 
$$
b_j=  \left( \frac{a_{j1}}{(1-|a_{jn}|^2)^{1/m_1}} ,\ldots, \frac{a_{j (n-1)}}{(1-|a_{jn}|^2)^{1/m_{n-1}}}\right),\; \forall  j\in \mathbb N^*. 
$$
Without loss of generality, one may assume that $b_j\to b\in \mathbb C^{n-1}$ as $j\to \infty$.
\medskip

Since $D_P$ is a $WB$-domain, two possibilities may occur:

\smallskip
\noindent
{\bf Subcase 1:} $p=(a',a_n)$ is a strongly pseudoconvex boundary point.  In this subcase, it follows directly from Theorem \ref{togetmodelstronglypsc} that $M$ is biholomorphically equivalent to $\mathbb B^{n}$.

\medskip

\noindent
{\bf Subcase 2:} $p=(0',e^{i\theta})$ is a weakly pseudoconvex boundary point. In this subcase, one must have $a_j'\to 0'$ and $a_{jn}\to e^{i\theta}$ as $j\to \infty$. Denote by $\rho(z):=|z_n|^2-1+P(z')$ a definition function for $D_P$. Then $\dist(a_j, \partial D_P)\approx -\rho(a_j)= 1-|a_{jn}|^2-P(a_j')$.
Suppose that $\{a_j\}$ converges $\Lambda$-nontangentially to $p$, i.e., $P(a_j')\approx \sigma_\Lambda(a_j')\lesssim \dist(a_j, \partial D_P)$, or equivalently $P(a_j')\leq C(1-|a_{jn}|^2-P(a_j')),\; \forall j\in \mathbb N^*$, for some $C>0$. This implies that 
$$
P(a_j')\leq \dfrac{C}{1+C}(1-|a_{jn}|^2),\; \forall j\in \mathbb N^*,
$$
and thus $P(b_j)=\dfrac{1}{1-|a_{jn}|^2}P(a_j')\leq \dfrac{C}{1+C}<1,\; \forall j\in \mathbb N^*$. This yields $ \psi_j\circ f_j^{-1}(q)=(b_j,0)\to (b,0)\in D_P$ as $j\to \infty$. So, again by Lemma \ref{orbitinside} one concludes that $M$ is biholomorphically equivalent to $D_P$.

Now let us consider the case that the sequence $\{a_j\}$ does not converge $\Lambda$-nontangentially to $p$, i.e., $P(a_j')\geq c_j  \dist(a_j, \partial D_P), \; \forall j\in \mathbb N^*$, where $0<c_j\to +\infty$. This implies that $P(a_j')\geq c_j'(1-|a_{jn}|^2-P(a_j')),\; \forall j\in \mathbb N^*$, for some $0<c_j'\to +\infty$, and hence
$$
P(a_j')\geq \dfrac{c_j'}{1+c_j'}(1-|a_{jn}|^2),\; \forall j\in \mathbb N^*.
$$
Thus, one obtains that $P(b_j)=\dfrac{1}{1-|a_{jn}|^2}P(a_j')\geq \dfrac{c_j'}{1+c_j'}$, which implies that $P(b)=1$. Consequently,  $\psi_j\circ f_j^{-1}(q)$ converges to the strongly pseudoconvex boundary point $p'=(b,0)$ of $\partial D_P$. Hence, as in Subcase $1$, it follows from Theorem \ref{togetmodelstronglypsc} that $M$ is biholomorphically equivalent to $\mathbb B^{n}$.

Therefore, altogether, the proof of Proposition \ref{generalellipsoid} finally follows.
\end{proof}

\section*{Appendix}

\begin{proof}[Proof of Lemma \ref{conti-kob}]
We shall follow the proof of \cite[Theorem $2.1$]{Yu95} with minor modifications. To do this, let us fix compact subsets $K\Subset M_P$ and $L\Subset \mathbb C^{n+1}$. Then it suffices to prove that $F_{D_j}(z,X)$ converges to $F_{M_P}(z,X)$ uniformly on $K\times L$. Indeed, suppose otherwise. Then, there exist $\epsilon_0>0$, a sequence of points $\{z_{j_\ell}\}\subset K$, and a sequence $X_{j_\ell}\subset L$ such that
$$
|F_{D_{j_\ell}}(z_{j_\ell},X_{j_\ell})-F_{M_P}(z_{j_\ell},X_{j_\ell})|>\epsilon_0,~\forall~\ell\geq 1.
$$

By the homogeneity of the Kobayashi metrics $F(z,X)$ in $X$, we may assume that $\|X_{j_\ell}\|=1$ for all $\ell\geq 1$. Moreover, passing to subsequences, we may also assume that $z_{j_\ell}\to z_0\in K$ and $X_{j_\ell}\to X_0\in L$ as $\ell \to \infty$. Since $M_P$ is taut (see \cite[Theorem $3.13$]{Yu95}), for each $(z,X)\in M_P\times \mathbb C^{n+1}$ with $X\ne 0$, there exists an analytic disc $\varphi\in \mathrm{Hol}(\Delta, M_P)$ such that $\varphi(0)=z$ and $\varphi'(0)=X/F_{M_P}(z,X)$. This implies that $F_{M_P}(z,X)$ is continuous on $M_P\times \mathbb C^{n+1}$. Hence, we obtain 
$$
F_{M_P}(z_{j_\ell},X_{j_\ell})\to F_{M_P}(z_0,X_0),
$$
and thus we have
\begin{align}\label{eq136-0}
|F_{D_{j_\ell}}(z_{j_\ell},X_{j_\ell})-F_{M_P}(z_0,X_0)|>\epsilon_0/2
\end{align}
for $\ell$ big enough.

By definition, for any $\delta\in (0,1)$ there exists a sequence of analytic discs $\varphi_{j_\ell}\in \mathrm{Hol}(\Delta, D_{j_\ell})$ such that $\varphi_{j_\ell}(0)=z_0,\varphi_{j_\ell}'(0)= \lambda_{j_\ell} X_{j_\ell}$, where $\lambda_{j_\ell}>0$, and 
\begin{align*}
F_{D_{j_\ell}}(z_{j_\ell},X_{j_\ell})\geq \frac{1}{\lambda_{j_\ell}}-\delta.
\end{align*}

It follows from Proposition \ref{pro-scaling} that every subsequence of the sequence $\{\varphi_{j_\ell}\}$ has a subsequence converging to some analytic disc $\psi\in \mathrm{Hol}(\Delta, M_P)$ such that $\psi(0)=z_0,\psi'(0)= \lambda X_0$, for some $\lambda>0$. Thus, one obtains that
$$
F_{M_P}(z_0,X_0)\leq \frac{1}{|\psi'(0)|}
$$
for any such $\psi$. Therefore, one has 
\begin{align} \label{eq136-1}
\liminf_{\ell\to \infty} F_{D_{j_\ell}}(z_{j_\ell},X_{j_\ell})\geq F_{M_P}(z_0,X_0)-\delta.
\end{align}

On the other hand, as in \cite{Yu95}, by the tautness of $M_P$, there exists a analytic disc $\varphi \in \mathrm{Hol}(\Delta, M_P)$ such that $\varphi(0)=z_0, \varphi'(0)=\lambda X_0$, where $\lambda=1/F_{M_P}(z_0,X_0)$.

Now for $\delta\in (0,1)$, let us define an analytic disc $\psi_{j_\ell}^\delta:\Delta\to \mathbb C^{n+1}$ by settings:
\begin{align*}
\psi_{j_\ell}^\delta(\zeta):= \varphi((1-\delta)\zeta)+\lambda (1-\delta) (X_{j_\ell}-X_0)+(z_{j_\ell}-z_0)\; \text{for all}\; \zeta \in \Delta.
\end{align*}
Since $\varphi((1-\delta)\overline{\Delta})$ is a compact subset of $M_P$ and $X_{j_\ell}\to X_0$, $z_{j_\ell}\to z_0$ as $\ell\to \infty$, it follows that $\psi_{j_\ell}^\delta(\Delta)\subset D_{j_\ell}$ for all sufficiently large $\ell$, that is, $\psi_{j_\ell}^\delta\in \mathrm{Hol}(\Delta, D_{j_\ell})$. Moreover, by construction, $\psi_{j_\ell}^\delta(0)=z_{j_\ell}$ and $\left(\psi_{j_\ell}^\delta\right)'(0)=(1-\delta)\lambda X_{j_\ell}$. Therefore, again by definition, one has
\begin{align*}
 F_{D_{j_\ell}}(z_{j_\ell},X_{j_\ell})\leq \frac{1}{(1-\delta) \lambda}=\frac{1}{(1-\delta)} F_{M_P}(z_0,X_0)
\end{align*}
for all large $\ell$. Thus, letting $\delta\to 0^+$, one concludes that
\begin{align}\label{eq136-2}
\limsup_{\ell\to \infty} F_{D_{j_\ell}}(z_{j_\ell},X_{j_\ell})\leq  F_{M_P}(z_0,X_0).
\end{align}
By (\ref{eq136-1}), (\ref{eq136-2}), and (\ref{eq136-0}), we seek a contradiction. Hence, the proof is complete.
\end{proof}

\bigskip

\begin{acknowledgement}Part of this work was done while the first author was visiting the Vietnam Institute for Advanced Study in Mathematics (VIASM). He would like to thank the VIASM for financial support and hospitality. The first author was supported by the Vietnam National Foundation for Science and Technology Development (NAFOSTED) under grant number 101.02-2017.311.
\end{acknowledgement}

\bibliographystyle{plain}

\end{document}